\def\draw #1 by #2 (#3){
	\vbox to #2{
		\hrule width #1 height 0pt depth 0pt
		\vfill
		\special{picture #3} 
	}
}
\def\scaleddraw #1 by #2 (#3 scaled #4){{
		\dimen0=#1 \dimen1=#2
		\divide\dimen0 by 1000 \multiply\dimen0 by #4
		\divide\dimen1 by 1000 \multiply\dimen1 by #4
		\draw \dimen0 by \dimen1 (#3 scaled #4)}
}
\newtheorem{theorem}{Theorem}[section]
\newtheorem{example}[theorem]{Example}
\newtheorem{problem}[theorem]{Problem}
\newtheorem{defin}[theorem]{Definition}
\newtheorem{lemma}[theorem]{Lemma}
\newtheorem{corollary}[theorem]{Corollary}
\newtheorem{remark}[theorem]{Remark}
\newtheorem{nt}{Note}
\newcommand{\singlespacing}{\let\CS=\@currsize\renewcommand{\baselinestretch}{1}\tiny\CS}
\newcommand{\oneandahalfspacing}{\let\CS=\@currsize\renewcommand{\baselinestretch}{1.25}\tiny\CS}
\newcommand{\doublespacing}{\let\CS=\@currsize\renewcommand{\baselinestretch}{1.35}\tiny\CS}
\newtheorem{conjecture}[theorem]{Conjecture}
\newtheorem{rule-def}[theorem]{Rule}
\begin{document}
	\baselineskip 16pt
	\newcommand{\la}{\lambda}
	\newcommand{\si}{\sigma}
	\newcommand{\ol}{1-\lambda}
	\newcommand{\be}{\begin{equation}}
		\newcommand{\ee}{\end{equation}}
	\newcommand{\bea}{\begin{eqnarray}}
		\newcommand{\eea}{\end{eqnarray}}

	\baselineskip=0.30in

	\begin{center}
		{\Large \bf On energy of graphs with self loops}  \\
		\vspace{8mm}
		
		{\large \bf B. R. Rakshith$^a$, Kinkar Chandra Das$^{b,}\footnote{Corresponding author}$, B. J. Manjunatha$^{c,d}$}
		
		\vspace{6mm}
		
		\baselineskip=0.20in
		
		{\it $^a$Department of Mathematics, Manipal Institute of Technology, \\
			Manipal Academy of Higher Education,
			Manipal 576 104, India.\/} \\
		{\rm E-mail:} {\tt ranmsc08@yahoo.co.in}\\[2mm]
		
		{\it $^b$Department of Mathematics, Sungkyunkwan University, \\
			Suwon 16419, Republic of Korea.\/} \\
		{\rm E-mail:} {\tt kinkardas2003@gmail.com}\\[2mm]

			{\it $^c$Department of Mathematics, Sri Jayachamarajendra College of Engineering, JSS Science and Technology University,
		Mysuru–570 006, India\/}\\[2mm]

		{\it $^d$Department of Mathematics, Vidyavardhaka College of Engineering\\
		Mysuru-570 002\\Affiliated to Visvesvaraya Technological University, Belagavi-590 018, India\/} \\
	{\rm E-mail:} {\tt manjubj@sjce.ac.in}\\
		
		\vspace{5mm}
		
	\end{center}

\vspace{5mm}

\baselineskip=0.20in

\begin{abstract}
Let $G$ be a simple graph on $n$ vertices with vertex set $V(G)$. The energy of $G$, denoted by, $\mathcal{E}(G)$ is the sum of all absolute values of the eigenvalues of the adjacency matrix $A(G)$. It is the first eigenvalue-based topological molecular index and is related to the molecular orbital
energy levels of $\pi$-electrons in conjugated hydrocarbons. Recently, the concept of energy of a graph is extended to a self-loop graph.  Let $S$ be a subset of $V(G)$. The graph $G_{S}$ is obtained from the graph $G$ by attaching a self-loop at each of the vertices of $G$ which are in the set $S$. The energy of the self-loop graph $G_{S}$, denoted by $\mathcal{E}(G_{S})$, is the sum of all absolute eigenvalues of the matrix $A(G_{S})$. Two non-isomorphic self-loop graphs are equienergetic if their energies are equal. Akbari et al. (2023) \cite{akbari} conjectured that there exist a subset $S$ of $V(G)$ such that  $\mathcal{E}(G_{S})> \mathcal{E}(G)$. In this paper, we confirm this conjecture. Also, we construct pairs of equienergetic self-loop graphs of order $24n$ for all  $n\ge 1$.
\bigskip

\noindent
{\bf AMS Classification:} 05C50.\\

\noindent
{\bf Key Words:} Energy of a graph, Singular value of a matrix, Self-loop graph.

\end{abstract}

\baselineskip=0.30in
\section{Introduction}
Let $G$ be a simple graph on $n$ vertices with vertex set $V(G)$. The adjacency matrix of $G$ is denoted by $A(G)$ and its $ij$th entry is equal to 1 if the vertices $v_{i}$ and $v_{j}$ are adjacent in $G$ and 0 otherwise. Let $\lambda_{1}\ge \lambda_{2}\ge \cdots\ge \lambda_{n}$ be the eigenvalues of $A(G)$. The energy of the graph $G$, denoted by $\mathcal{E}(G)$, is the sum of all absolute eigenvalues of $G$ ($A(G)$), i.e. $\mathcal{E}(G)=\sum_{i=1}^{n}|\lambda_{i}|$. For  recent studies including its chemical significance, see \cite{AD1,AD2,DA1,E1,E2,E3,A1,A2}. Let $S$ be a subset of $V(G)$. The graph $G_{S}$ is obtained from the graph $G$ by attaching a self-loop at each of the vertices of $G$ which are in the set $S$. The adjacency matrix of the self-loop graph $G_{S}$ is denoted by $A(G_{S}) =D_{S}(G)+A(G)$, where $D_{S}(G)$ is the diagonal matrix with its $i$th diagonal entry equal to 1 if the vertex $v_{i}$ has a self-loop in $G_{S}$, 0 otherwise. Let $\lambda^{S}_{1}\ge \lambda^{S}_{2}\ge\cdots\ge\lambda^{S}_{n}$ be the eigenvalues of $A(G_{S})$ and let $|S|=\alpha$. The energy of self-loop graph $G_{S}$ is denoted by $\mathcal{E}(G_{S})$ and is defined as \cite{gutman} $$\mathcal{E}(G_{S})=\sum_{i=1}^{n}\left|\lambda^{S}_{i}-\dfrac{\alpha}{n}\right|.$$
Two non-isomorphic self-loop graphs are equienergetic if their energies are equal. The concept of energy of a self-loop graph was put forward very recently by Gutman et al. in \cite{gutman}. The authors in \cite{gutman} proved that if $G$ is a bipartite graph, then $\mathcal{E}(G_{S})=\mathcal{E}(G_{V(G)\backslash S})$. Also, they conjectured that $\mathcal{E}(G_{S})>\mathcal{E}(G)$ for $1<|S|<n-1$. However this conjecture was disproved in \cite{c2}. More details on the energy and spectrum of $G_{S}$ can be found in \cite{akbari,gutman,c2}. Recently, in \cite{akbari}, the authors proved that for a bipartite graph $\mathcal{E}(G_{S})\ge\mathcal{E}(G)$, $1<|S|<n-1$, and gave a conjecture as follows:
\begin{conjecture}\label{c1}
	For every simple graph $G$ of order at least 2, there exists $S\subseteq V(G)$ such that $\mathcal{E}(G_S)>
	\mathcal{E}(G)$.
\end{conjecture}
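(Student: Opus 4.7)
\medskip
\noindent\textbf{Proof plan.}
I argue by contradiction, using that the energy $\mathcal{E}(M) = \sum_i |\mu_i(M)|$ of a symmetric matrix $M$ equals its nuclear norm $\|M\|_*$, a convex function on symmetric matrices. Writing $M_S := A(G) + D_S(G) - (|S|/n)\,I$ gives $\mathcal{E}(G_S) = \|M_S\|_*$, and the identity $M_S + M_{\bar S} = 2\,A(G)$ together with convexity of $\|\cdot\|_*$ yields $\mathcal{E}(G_S) + \mathcal{E}(G_{\bar S}) \ge 2\,\mathcal{E}(G)$. Hence if no $S$ satisfies $\mathcal{E}(G_S) > \mathcal{E}(G)$, then $\mathcal{E}(G_S) = \mathcal{E}(G)$ for every $S$. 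The plan is to show this forces $G$ to be edgeless, contradicting the nontrivial case; the edgeless case itself is immediate, since $\mathcal{E}(G_{\{v\}}) = 2(n-1)/n > 0 = \mathcal{E}(G)$ for any $v$.

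\medskip

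First I extract first-order information. For any subgradient $W \in \partial \|A(G)\|_*$, that is, any symmetric $W$ with $\|W\|_{\mathrm{op}} \le 1$ agreeing with the spectral sign function on $\mathrm{range}(A(G))$, convexity gives $\mathcal{E}(G_S) - \mathcal{E}(G) \ge \mathrm{tr}\bigl(W(D_S(G) - (|S|/n)I)\bigr)$. As this quantity is $\le 0$ for both $S$ and $\bar S$ and the two sum to zero, each vanishes; letting $S$ run over singletons forces $W_{vv} = \mathrm{tr}(W)/n$ for every $v$, so every subgradient has constant diagonal. Parameterising $\partial \|A(G)\|_* = \{P_+ - P_- + K : K \text{ symmetric supported on } \ker A(G),\ \|K\|_{\mathrm{op}} \le 1\}$, where $P_\pm, P_0$ are the spectral projectors onto the positive, negative and zero eigenspaces, this rigidity forces (a) $(P_+)_{vv}, (P_-)_{vv}$ are constant in $v$, (b) $|z(v)| = 1/\sqrt n$ for every unit kernel vector $z$ (take $K = zz^\top$), and (c) $\dim \ker A(G) \le 1$ (else $K = z_i z_j^\top + z_j z_i^\top$ for two orthogonal kernel vectors would have nonconstant diagonal $2z_i(v) z_j(v)$).

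\medskip

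The crux is a second-order argument. For each vertex $v$ set $B_v := D_{\{v\}}(G) - (1/n)\,I$ and $f_v(t) := \|A(G) + t B_v\|_*$; using $M_{V\setminus\{v\}} = A(G) - B_v$ the hypothesis gives $f_v(-1) = f_v(0) = f_v(1) = \mathcal{E}(G)$, and convexity extends this to $f_v \equiv \mathcal{E}(G)$ on $[-1,1]$. In the nonsingular case, Rayleigh--Schr\"odinger perturbation of the eigenvalues of $A(G) + t B_v$ gives a linear coefficient equal to a trace already forced to zero and a quadratic coefficient
\[
4 \sum_{\lambda_i > 0,\ \lambda_j < 0} \frac{x_i(v)^2 \, x_j(v)^2}{\lambda_i - \lambda_j} \;\ge\; 0,
\]
a sum of nonnegative terms whose vanishing forces $x_i(v) x_j(v) = 0$ whenever $\lambda_i > 0 > \lambda_j$ and for every vertex $v$. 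When $\dim\ker A(G) = 1$, a parallel expansion including kernel--nonkernel cross terms additionally forces $x_i(v) z(v) = 0$ for every eigenvector $x_i$ with $\lambda_i \neq 0$; combined with $|z(v)| = 1/\sqrt n > 0$ this kills every nonzero-eigenvalue eigenvector, so the case $\dim\ker A(G) = 1$ is in fact impossible and $A(G)$ must be nonsingular.

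\medskip

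To conclude, partition $V(G) = V_+ \sqcup V_-$, where $V_+$ is the set of vertices at which all negative-eigenvalue eigenvectors vanish and $V_-$ is the complement; the second-order condition makes this a disjoint cover. On $V_+$ one has $(P_+)_{vv} = 1$, while on $V_-$ one has $(P_+)_{vv} = 0$, so the constancy of $(P_+)_{vv}$ from the first-order step forces one of $V_\pm$ to be empty. Hence $A(G)$ is positive or negative semidefinite, and together with $\mathrm{tr}(A(G)) = 0$ this yields $A(G) = 0$, contradicting the assumption that $G$ has an edge. The main obstacle is the careful handling of the singular case: the nuclear norm has a kink at zero eigenvalues, so one must first verify that the kink coefficient $\|P_0 B_v P_0\|_*$ vanishes --- a rank comparison eliminates $\dim\ker A(G) \ge 2$, and the $|z(v)| = 1/\sqrt n$ condition emerges when $\dim\ker A(G) = 1$ --- before the smooth second-order expansion becomes applicable.
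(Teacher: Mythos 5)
Your argument is correct, but it takes a genuinely different route from the paper. The paper also starts from the identity $M_S+M_{\bar S}=2A(G)$ and the resulting inequality $\mathcal{E}(G_S)+\mathcal{E}(G_{\bar S})\ge 2\mathcal{E}(G)$, but it obtains the latter from the Day--So singular value inequality and then exploits that lemma's \emph{equality characterization}: equality forces an orthogonal $P$ making $PA(G)$ and $P[A(G_S)-\tfrac{\alpha}{n}I]$ simultaneously positive semi-definite. Choosing $S$ to be an independent set in a component of order $\ge 2$, the paper shows $P$ must be block-diagonal, so $PA(G)$ is a PSD matrix with a zero diagonal block, and the fact that a PSD matrix with a zero diagonal entry has a zero row kills the off-diagonal block $X$ --- contradicting connectivity. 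You instead avoid the equality case of Day--So entirely, using only convexity of the nuclear norm, and run a variational argument: assuming every $S$ gives equality, the first-order (subgradient) conditions force constant-diagonal subgradients (hence $\dim\ker A(G)\le 1$ and $|z(v)|=1/\sqrt{n}$), and the second-order perturbation of $t\mapsto\|A(G)+tB_v\|_*$ forces $(P_+)_{vv}(P_-)_{vv}=0$ at every vertex, whence $A(G)$ is semidefinite with zero trace, i.e.\ zero. Your computations check out, including the delicate points (the quadratic coefficient is correct even for degenerate eigenvalues, since same-sign pairs contribute nothing; the kernel cross-terms do eliminate the corank-one case). What the two approaches buy is different: the paper's proof is much shorter and exhibits an explicit family of witnesses (any independent set in a nontrivial component, or its complement), while yours needs heavier machinery (nuclear-norm subdifferentials and second-order eigenvalue perturbation) but localizes the witness more sharply --- it shows that already some single vertex $v$ satisfies $\mathcal{E}(G_{\{v\}})>\mathcal{E}(G)$ or $\mathcal{E}(G_{V(G)\setminus\{v\}})>\mathcal{E}(G)$, a refinement the paper does not give.
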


In this paper we prove that the Conjecture \ref{c1} is true. Also, we construct pairs of equienergetic self-loop graph of order $24n$ for all  $n\ge 1$.

\section{Main results}
Let $A$ be an $n\times n$ complex matrix and denote its singular values by $s_{1}(A)\ge s_{2}(A)\ge\cdots\ge s_{n}(A)$. Since the matrices $A(G)$ and $A(G_{S})$ are symmetric, $\mathcal{E}(G)=\displaystyle\sum_{i=1}^{n}s_{i}(A(G))$ and $\mathcal{E}(G_S)=\displaystyle\sum_{i=1}^{n}s_{i}\left(A(G_{S})-\dfrac{\alpha}{n}\right)$. We need the following two lemmas to prove our main result.
\begin{lemma}{\rm\cite{so}}\label{l1}
Let $A$ and $B$ be two $n\times n$ matrices. Then
$$\sum_{i=1}^{n}s_{i}(A+B)\le\sum_{i=1}^{n}s_{i}(A)+\sum_{i=1}^{n}s_{i}(B)$$ and the equality holds if and only if there exists an unitary matrix $P$ such that both the matrices $PA$ and $PB$ are positive semi-definite. Moreover, if $A$ and $B$ are real matrices, then the matrix $P$ can be taken as real orthogonal.     	
\end{lemma}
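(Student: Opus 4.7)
The plan is to derive both the inequality and the equality characterization of Lemma 2.1 from the variational representation of the trace norm. Recall that for any $n \times n$ matrix $X$,
\[
    \sum_{i=1}^{n} s_{i}(X) \;=\; \max_{P \text{ unitary}} \mathrm{Re}\,\mathrm{tr}(PX),
\]
with the maximum attained exactly when $PX$ is positive semi-definite. Indeed, from the polar decomposition $X = U|X|$ one sees that $P = U^{*}$ is a maximizer, since then $PX = |X|$ is PSD and $\mathrm{tr}(PX) = \sum_{i} s_{i}(X)$. This single characterization will drive both halves of the lemma.

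For the inequality, I would first apply the polar decomposition to $A+B$: pick a unitary $P$ with $P(A+B) = |A+B|$ positive semi-definite. Then by linearity of trace,
\[
    \sum_{i=1}^{n} s_{i}(A+B) \;=\; \mathrm{tr}\bigl(P(A+B)\bigr) \;=\; \mathrm{tr}(PA) + \mathrm{tr}(PB).
\]
The sum on the right is real, so one can pass to real parts and bound each summand by $\mathrm{Re}\,\mathrm{tr}(PX) \le \sum_{i} s_{i}(X)$, an instance of von Neumann's trace inequality for unitary $P$. Adding the two bounds yields the desired triangle inequality.

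For the equality direction, if the triangle inequality is tight with the $P$ chosen above, then $\mathrm{tr}(PA) = \sum_{i} s_{i}(A)$ and $\mathrm{tr}(PB) = \sum_{i} s_{i}(B)$ must hold simultaneously. By the attainment clause of the variational formula, each such equality forces $PA$ (respectively $PB$) to be positive semi-definite, yielding the desired common unitary. The converse is immediate: if $PA$ and $PB$ are both PSD for a single unitary $P$, then $P(A+B) = PA + PB$ is PSD, whence $\sum s_{i}(A+B) = \mathrm{tr}\bigl(P(A+B)\bigr) = \mathrm{tr}(PA) + \mathrm{tr}(PB) = \sum s_{i}(A) + \sum s_{i}(B)$.

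The main subtlety I expect is the non-uniqueness of the polar unitary when $A$, $B$, or $A+B$ has zero singular values---on the corresponding kernels the unitary factor carries a gauge freedom, and one must check that this freedom can be used consistently to pin down a single $P$ witnessing the PSD condition for both $A$ and $B$ simultaneously. For the real-matrix case, the polar decomposition of a real matrix can be carried out with a real orthogonal unitary factor, so running the same argument over $\mathbb{R}$ produces a real orthogonal $P$, establishing the \emph{moreover} clause.
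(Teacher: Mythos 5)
The paper does not prove this lemma; it is quoted verbatim from Day and So \cite{so}. Your argument is correct and is essentially the same as the one in that reference: the variational formula $\sum_i s_i(X)=\max_{P}\mathrm{Re}\,\mathrm{tr}(PX)$ over unitaries, with attainment precisely when $PX$ is positive semi-definite, is exactly the engine used there, and your derivation of both the inequality and the equality case from it is sound. One small remark: the ``gauge freedom'' subtlety you flag at the end is not actually an issue --- in the equality direction, \emph{any} unitary polar factor $P$ of $A+B$ satisfies $\mathrm{Re}\,\mathrm{tr}(PA)+\mathrm{Re}\,\mathrm{tr}(PB)=\sum_i s_i(A)+\sum_i s_i(B)$, which by the two individual von Neumann bounds forces both traces to attain their maxima, so that same $P$ automatically makes $PA$ and $PB$ positive semi-definite with no further adjustment on kernels. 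The real/orthogonal refinement via the real polar decomposition is likewise fine.
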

\begin{lemma}{\rm\cite{horn}}\label{l2}
Let $A=[a_{ij}]_{n\times n}$ be a positive semi-definite matrix such that $a_{ii}=0$ for some $1\le i\le n$. Then $a_{ij}=a_{ji}=0$ for all $1\le j\le n$. 	
\end{lemma}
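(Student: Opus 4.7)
The plan is to use the variational characterization of positive semi-definiteness: $A$ is positive semi-definite if and only if $x^{*} A x \ge 0$ for every $x \in \mathbb{C}^{n}$. First I observe that any positive semi-definite matrix is Hermitian, so $a_{ji}=\overline{a_{ij}}$; hence it suffices to prove $a_{ij}=0$ for every $j\ne i$ (the case $j=i$ is given). The strategy is to test the quadratic form on a one-parameter family of vectors supported on coordinates $i$ and $j$ and exploit the fact that $a_{ii}=0$ kills the constant term, leaving a quadratic in the parameter whose sign I can control.

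Concretely, fix $j\ne i$, let $e_k$ denote the $k$th standard basis vector, and consider $x=e_i+t e_j$ for a scalar $t$ to be chosen. A direct computation yields
\[
x^{*} A x \;=\; a_{ii} + t\,a_{ji} + \bar t\, a_{ij} + |t|^{2} a_{jj} \;=\; 2\operatorname{Re}(\bar t\, a_{ij}) + |t|^{2} a_{jj},
\]
using $a_{ii}=0$ and $a_{ji}=\overline{a_{ij}}$. Now suppose, for contradiction, that $a_{ij}\ne 0$, and set $t=-\varepsilon\,a_{ij}/|a_{ij}|$ for a small $\varepsilon>0$. Then $\bar t\, a_{ij}=-\varepsilon|a_{ij}|$ is real and negative, and $|t|^{2}=\varepsilon^{2}$, so
\[
x^{*} A x \;=\; -2\varepsilon|a_{ij}| + \varepsilon^{2} a_{jj}.
\]
For $\varepsilon$ sufficiently small the linear term in $\varepsilon$ dominates the quadratic one, making $x^{*}Ax<0$ and contradicting the positive semi-definiteness of $A$. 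Hence $a_{ij}=0$, and then $a_{ji}=\overline{a_{ij}}=0$.

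There is no real obstacle here; this is a textbook one-line fact, and the only minor subtlety is choosing the phase of $t$ so that the cross term becomes real and negative. An equivalent and arguably cleaner route is to observe that the $2\times 2$ principal submatrix of $A$ indexed by $\{i,j\}$ inherits positive semi-definiteness, so its determinant $a_{ii}a_{jj}-|a_{ij}|^{2}=-|a_{ij}|^{2}$ must be nonnegative, which forces $a_{ij}=0$ immediately. Either presentation reduces the claim to a single non-negativity constraint, on a quadratic in $\varepsilon$ in the first case or on a $2\times 2$ determinant in the second.
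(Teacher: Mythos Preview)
Your proof is correct. Note that the paper does not actually prove this lemma: it is quoted from Horn and Johnson's \emph{Matrix Analysis} and stated without proof, so there is no argument in the paper to compare against. Both of the approaches you give --- the quadratic-form test on $e_i+te_j$ and the nonnegativity of the $2\times 2$ principal minor $a_{ii}a_{jj}-|a_{ij}|^{2}$ --- are standard textbook derivations of this fact, and either would be acceptable as a self-contained justification.
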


Let $G=(V,E)$ be any graph, and let $S\subseteq V(G)$ be any subset of vertices of $G$. Then the induced subgraph
$G[S]$ is the graph whose vertex set is $S$ and whose edge set consists of all of the edges in $E(G)$ that have both endpoints in $S$.
\begin{theorem}
Let $G$ be a graph and let $\emptyset\neq S\subsetneq V(G)$.
\begin{enumerate}[(i)]
	\item
If $\mathcal{E}(G_{S})<\mathcal{E}(G)$, then $\mathcal{E}(G_{V(G)\backslash S})>\mathcal{E}(G)$. Whereas if $\mathcal{E}(G_{S})=\mathcal{E}(G)$, then $\mathcal{E}(G_{V(G)\backslash S})\ge\mathcal{E}(G)$.
\item
If $S$ is a vertex independent set of a connected component $H$ (say) of $G$ such that $|V(H)|\ge 2$, then either $\mathcal{E}(G_{S})>\mathcal{E}(G)$ or $\mathcal{E}(V(G)\backslash S)>\mathcal{E}(G)$.       	
\end{enumerate}
\end{theorem}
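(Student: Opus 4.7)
The plan is to set $M_{1}:=A(G_{S})-\tfrac{\alpha}{n}I$ and $M_{2}:=A(G_{V(G)\backslash S})-\tfrac{n-\alpha}{n}I$ and exploit the identity $M_{1}+M_{2}=2A(G)$, which is immediate once one checks that the diagonal shift in $M_{1}$ (namely $\tfrac{n-\alpha}{n}$ on $S$ and $-\tfrac{\alpha}{n}$ on $V(G)\backslash S$) is the negative of the diagonal shift in $M_{2}$. Since $A(G), M_{1}, M_{2}$ are symmetric, Lemma~\ref{l1} applied to $M_{1}+M_{2}$ yields
$$
2\,\mathcal{E}(G)=\sum_{i=1}^{n}s_{i}(M_{1}+M_{2})\le\sum_{i=1}^{n}s_{i}(M_{1})+\sum_{i=1}^{n}s_{i}(M_{2})=\mathcal{E}(G_{S})+\mathcal{E}(G_{V(G)\backslash S}).
$$
Part~(i) drops out by rearrangement: if $\mathcal{E}(G_{S})<\mathcal{E}(G)$, then $\mathcal{E}(G_{V(G)\backslash S})>2\mathcal{E}(G)-\mathcal{E}(G_{S})>\mathcal{E}(G)$, and the equality version is identical.

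For part~(ii), a case analysis together with part~(i) reduces the problem to ruling out the single case $\mathcal{E}(G_{S})=\mathcal{E}(G_{V(G)\backslash S})=\mathcal{E}(G)$. Under this assumption the inequality above is an equality, so by the equality clause of Lemma~\ref{l1} there is a real orthogonal matrix $P$ with both $PM_{1}$ and $PM_{2}$ positive semi-definite; consequently $PA(G)=\tfrac{1}{2}(PM_{1}+PM_{2})$ is positive semi-definite as well.

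Next I extract a block structure for $P$. Set $B:=D_{S}(G)-\tfrac{\alpha}{n}I$, the diagonal matrix equal to $\tfrac{n-\alpha}{n}$ on $S$ and $-\tfrac{\alpha}{n}$ on $V(G)\backslash S$. Because $PM_{1}$ and $PA(G)$ are symmetric, so is their difference $PB$; combining $PB=(PB)^{T}=BP^{T}$ with $PP^{T}=I$ gives $PBP=B$. Writing out the diagonal identity $(PBP)_{ii}=B_{ii}$ for $i\in S$ and using $\sum_{k}P_{ik}^{2}=1$, one finds $\sum_{k\notin S}P_{ik}^{2}=0$ for every $i\in S$, so $P$ is block-diagonal with respect to the partition $(S,V(G)\backslash S)$, say $P=\operatorname{diag}(P_{1},P_{2})$ with $P_{1},P_{2}$ orthogonal.

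Finally, because $S$ is an independent set inside the component $H$ and $H$ is a union of whole components' worth of vertices, $A(G)$ has a zero $(S,S)$ block. Multiplying by the block-diagonal $P$, the $(S,S)$ block of $PA(G)$ is also zero, so the diagonal entries of the positive semi-definite matrix $PA(G)$ indexed by $S$ all vanish. Lemma~\ref{l2} then forces every $S$-indexed row and column of $PA(G)$ to be zero, and invertibility of $P_{1},P_{2}$ transfers this to $A(G)$; every vertex in $S$ is isolated in $G$. This contradicts $\emptyset\neq S\subseteq V(H)$ with $H$ connected and $|V(H)|\ge 2$. The main obstacle I anticipate is the equality-case analysis, namely deducing the block-diagonal form of $P$ from $PBP=B$ and then spotting that $S$ independent is exactly the hypothesis needed to trigger Lemma~\ref{l2} on $PA(G)$; once this is set up, the contradiction is one line.
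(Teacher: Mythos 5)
Your overall route is the same as the paper's: the identity $M_{1}+M_{2}=2A(G)$, Lemma~\ref{l1} to obtain $2\mathcal{E}(G)\le\mathcal{E}(G_{S})+\mathcal{E}(G_{V(G)\backslash S})$, part (i) by rearrangement, and for part (ii) a reduction to the double-equality case followed by Lemma~\ref{l2} applied to the zero $(S,S)$ block of $PA(G)$. The one step where you diverge --- and where there is a genuine gap --- is the derivation of the block-diagonal form of $P$. From $PB=(PB)^{T}=BP^{T}$ you correctly get $PBP=B$, but $(PBP)_{ii}=\sum_{k}P_{ik}B_{kk}P_{ki}$, not $\sum_{k}B_{kk}P_{ik}^{2}$; the two agree only if $P$ is symmetric, which you have not established. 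Doing the computation correctly (substitute $P_{ki}=\tfrac{B_{kk}}{B_{ii}}P_{ik}$, the entrywise form of $PB=BP^{T}$, valid since $B_{ii}\neq 0$ for $0<\alpha<n$) turns the diagonal identity into $\sum_{k}B_{kk}^{2}P_{ik}^{2}=B_{ii}^{2}$, and combined with $\sum_{k}P_{ik}^{2}=1$ this yields $\bigl((\alpha/n)^{2}-(1-\alpha/n)^{2}\bigr)\sum_{k\notin S}P_{ik}^{2}=0$. Hence you obtain $P_{12}=P_{21}=\mathbf{0}$ only when $|S|\neq n/2$.

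When $|S|=n/2$ the step genuinely fails: there $B=\tfrac12\,\mathrm{diag}(I_{\alpha},-I_{\alpha})$, and every rotation mixing an $S$-coordinate with a non-$S$-coordinate is orthogonal, non-block-diagonal, and satisfies $PB=(PB)^{T}$ (for $n=2$, $P=\bigl[\begin{smallmatrix}\cos\theta&\sin\theta\\-\sin\theta&\cos\theta\end{smallmatrix}\bigr]$ gives $PB=\tfrac12\bigl[\begin{smallmatrix}\cos\theta&-\sin\theta\\-\sin\theta&-\cos\theta\end{smallmatrix}\bigr]$, which is symmetric). So symmetry of $PB$ plus orthogonality of $P$ cannot by themselves deliver the block structure in this case; you would need to invoke the positive semi-definiteness of $PA(G)$, $PM_{1}$, $PM_{2}$ already at this stage, or argue as the paper does: it asserts that the orthogonal matrix in the equality case of Lemma~\ref{l1} may be taken \emph{symmetric}, after which $P_{ji}=P_{ij}$ combined with $B_{jj}P_{ij}=B_{ii}P_{ji}$ and $B_{ii}-B_{jj}=1$ (for $i\in S$, $j\notin S$) forces $P_{12}=P_{21}=\mathbf{0}$ for every $\alpha$. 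Note that $|S|=n/2$ is not a negligible corner case --- it occurs already for $G=K_{2}$ with $S$ a single vertex, and more generally whenever $S$ is one side of a balanced bipartition. Everything before and after this step (the identity, the application of Lemma~\ref{l1}, part (i), the reduction of (ii) to $\mathcal{E}(G_{S})=\mathcal{E}(G_{V(G)\backslash S})=\mathcal{E}(G)$, and the use of Lemma~\ref{l2} once block-diagonality is in hand) is correct.
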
\label{t1}

\begin{proof}
Let $G[S]$ and $G[V(G)\backslash S]$ be the subgraph of $G$ induced by the vertex sets $S$ and $V(G)\backslash S$, respectively. Let $|S|=\alpha$.   Then $$A(G_{S})=\left[\begin{array}{cc}
A(G[S])+I_{\alpha}&X\\[3mm]
X^{T}& A(G[V(G)\backslash S])
\end{array}\right]$$
and $$A(G_{V(G)\backslash S})=\left[\begin{array}{cc}
A(G[S])&X\\[3mm]
X^{T}& A(G[V(S)\backslash S])+I_{n-\alpha}
\end{array}\right].$$ 

\vspace*{3mm}

\noindent
Therefore,  
\begin{equation}\label{eq1}
\left[A(G_{S})-\dfrac{\alpha}{n}I_{n}\right]+\left[A(G_{V(G)\backslash S})-\Big(\dfrac{n-\alpha}{n}\Big)\,I_{n}\right]=2A(G).
\end{equation}

\vspace*{3mm}

\noindent
Applying  Lemma \ref{l1} to the equation (\ref{eq1}), we get \begin{equation}\label{eq2}2\sum_{i=1}^{n}s_{i}(A(G))\le\sum_{i=1}^{n}s_{i}\left(A(G_{S})-\dfrac{\alpha}{n}I_{n}\right)+\sum_{i=1}^{n}s_{i}\left(A(G_{V(G)\backslash S})-\Big(\dfrac{n-\alpha}{n}\Big)\,I_{n}\right).
\end{equation}
That is,
\begin{equation}\label{eq3}
2\,\mathcal{E}(G)\le \mathcal{E}(G_{S})+\mathcal{E}(G_{V(G)\backslash S}).	
\end{equation}
If  $\mathcal{E}(G_{S})< \mathcal{E}(G)$, then by equation (\ref{eq3}), we get
$\mathcal{E}(G_{V(G)\backslash S})>	\mathcal{E}(G)$, and if  $\mathcal{E}(G_{S})= \mathcal{E}(G)$, then again by equation (\ref{eq3}), we get
$\mathcal{E}(G_{V(G)\backslash S})\ge \mathcal{E}(G)$. Proving (i).

\vspace*{3mm}

Let $S$ be a vertex independent set of a connected component $H$ of $G$ with $|V(H)|\ge 2$. Then 
$$A(G)=\left[\begin{array}{cc}
	\textbf{0}&X\\[3mm]
	X^{T}& A(G[V(G)\backslash S])
\end{array}\right]~~\mbox{ and }~~A(G_{S})=\left[\begin{array}{cc}
	I_{\alpha}&X\\[3mm]
	X^{T}& A(G[V(G)\backslash S])
\end{array}\right].$$ 
Since $|V(H)|\ge 2$, we must have $X\neq \textbf{0}$.

\vspace*{3mm}

Assume that  $\mathcal{E}(G_{S})=\mathcal{E}(G_{V(G)\backslash S})= \mathcal{E}(G)$. Then the equality in equation (\ref{eq2}) holds. Thus by Lemma \ref{l1}, there exists a real orthogonal matrix $P$ such that $PA(G)$, $P\left[A(G_{S})-\dfrac{\alpha}{n}I_{n}\right]$ and $P\left[A(G_{V(G)\backslash S})-\left(\dfrac{n-\alpha}{n}\right)I_{n}\right]$ are positive semi-definite. Moreover since $A$ is symmetric, we can further assume that the orthogonal matrix $P$ is symmetric.\\[2mm]
Now,
\begin{equation}\label{eq4}P\left[A(G_{S})-\dfrac{\alpha}{n}I_{n}\right]=PA(G)+P\left[\begin{array}{cc}
\left(1-\dfrac{\alpha}{n}\right)I_{\alpha}&\textbf{0}\\[3mm]
\textbf{0}&-\dfrac{\alpha}{n}I_{n-\alpha}
\end{array}\right].\end{equation}
Since $PA(G)$ and $P\left[A(G_{S})-\dfrac{\alpha}{n}I_{n}\right]$ are positive semi-definite, by symmetry the matrix\\ $P\left[\begin{array}{cc}
	\left(1-\dfrac{\alpha}{n}\right)I_{\alpha}&\textbf{0}\\[3mm]
	\textbf{0}&-\dfrac{\alpha}{n}I_{n-\alpha}
\end{array}\right]$ is symmetric. Let $P=\left[\begin{array}{cc}
P_{11}& P_{12}\\[3mm]
P_{21}&P_{22}
\end{array}\right]$ and the matrix $P_{11}$ be of order $\alpha$. Then by the symmetry of the matrix $P$, we get \\[2mm]   	
\begin{equation*}P\left[\begin{array}{cc}
	\left(1-\dfrac{\alpha}{n}\right)I_{\alpha}&\textbf{0}\\\\
	\textbf{0}&-\dfrac{\alpha}{n}I_{n-\alpha}
\end{array}\right]=\left[\begin{array}{cc}
\left(1-\dfrac{\alpha}{n}\right)P_{11}&-\dfrac{\alpha}{n}P_{12}\\\\
\left(1-\dfrac{\alpha}{n}\right)P_{21}&-\dfrac{\alpha}{n}P_{22}
\end{array}\right]=\left[\begin{array}{cc}
\left(1-\dfrac{\alpha}{n}\right)P_{11}&\left(1-\dfrac{\alpha}{n}\right)P_{12}\\\\-\dfrac{\alpha}{n}P_{21}&-\dfrac{\alpha}{n}P_{22}
\end{array}\right]\end{equation*}
Thus, $P_{12}=P_{21}=\textbf{0}$, otherwise $1-\dfrac{\alpha}{n}=-\dfrac{\alpha}{n}$, that is, $1=0$, a contradiction. Therefore,
$P=\left[\begin{array}{cc}
	P_{11}& \textbf{0}\\[3mm]
	\textbf{0}&P_{22}
\end{array}\right]$, $P_{11}$ and $P_{22}$ are orthogonal matrices. Henceforth,     
$$PA(G)=\left[\begin{array}{cc}
\textbf{0}&P_{11}X\\[3mm]
P_{22}X^{T}& P_{22}A(G[V(G)\backslash S])
\end{array}\right].$$ 
Since $PA(G)$ is positive semi-definite, by Lemma \ref{l2}, we must have $P_{11}X=\textbf{0}$ and so $X=\textbf{0}$ as $P_{11}$ is orthogonal, which is a contradiction. Thus $\mathcal{E}(G_{S})$ and $\mathcal{E}(G_{V(G)\backslash S})$ both cannot be equal $\mathcal{E}(G)$, and hence by (i), the proof of (ii) follows.
\end{proof}
Now we are ready to prove Conjecture \ref{c1}.\\[2mm]
\textbf{Proof of Conjecture \ref{c1}:}
Let $\emptyset \neq S\subsetneq V(G)$ and $|S|=\alpha$. If $G$ is an empty graph, then $\mathcal{E}(G)=0$ and $\mathcal{E}(G_{S})=\dfrac{2\alpha(n-\alpha)}{n}$, so $\mathcal{E}(G_S)>
\mathcal{E}(G)$. Otherwise, $G$ has a connected component $H$ of order at least 2. Let $S$ be a vertex independent set of $H$. Then by Theorem \ref{t1} (ii), either $\mathcal{E}(G_{S})>\mathcal{E}(G)$ or $\mathcal{E}(G_{V(G)\backslash S})> \mathcal{E}(G)$. This completes the proof.

\vspace*{4mm}
\subsection{Equienergetic graphs with self-loops}
	In this subsection, we construct infinite families of graphs with self loops that are equienergetic.
	Let $G$ and $H$ be two graphs. The join $G\vee H$ of $G$ and $H$ is a
	graph obtained from $G$ and $H$ by joining each vertex of $G$ to every
	vertices in $H$. We denote $n$ copies of $G$ by $nG$. The matrix whose entries are all ones and of size $p\times q$ is denoted by $J_{p\times q}$.\\[2mm]
The following lemma proved in \cite{rak} is useful to determine the spectra of join of two regular graphs.
\begin{lemma}{\rm\cite{rak}}\label{lemma1}
	For $i=1, 2$, let $M_{i}$ be a normal matrix of order $n_{i}$ having all its row sums equal to $r_{i}$.
	Suppose
	$r_{i}$, $\theta_{i2},\theta_{i3},\ldots,\theta_{in_{i}}$ are the
	eigenvalues of $M_{i}$, then for any two constants a and b, the
	eigenvalues of
	$M:=\left[\begin{array}{cc} M_{1}&aJ_{n_{1}\times n_{2}}\\
		bJ_{n_{2}\times n_{1}}&M_{2}\end{array}\right]$, are $\theta_{ij}$
	for $i=1, 2$, $j=2, 3,\ldots, n_{i}$ and the two roots of the
	quadratic equation $(x-r_{1})(x-r_{2})-abn_{1}n_{2}=0$.
\end{lemma}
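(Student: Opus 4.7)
The plan is to exhibit an explicit set of $n_1+n_2$ eigenvectors of $M$ by splitting $\mathbb{C}^{n_1+n_2}$ into the two-dimensional ``constant on each block'' subspace $W$ and its complement, and then reading off the eigenvalues on each piece. Normality of each $M_i$ is precisely what makes this splitting work cleanly.

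I would begin by choosing the eigenbases of $M_i$ carefully. Since $M_i$ is normal, the spectral theorem gives an orthonormal basis of $\mathbb{C}^{n_i}$ consisting of eigenvectors of $M_i$. The row-sum hypothesis forces $M_i\mathbf{1}_{n_i}=r_i\mathbf{1}_{n_i}$, so I can arrange this basis so that $\mathbf{1}_{n_i}/\sqrt{n_i}$ is one of its members, and the remaining vectors $v_{i2},\dots,v_{in_i}$, with $M_iv_{ij}=\theta_{ij}v_{ij}$, are orthogonal to $\mathbf{1}_{n_i}$. The zero-padded vector $\tilde v_{1j}=(v_{1j}^{T},\mathbf{0}^{T})^{T}$ then satisfies $J_{n_2\times n_1}v_{1j}=(\mathbf{1}_{n_1}^{T}v_{1j})\mathbf{1}_{n_2}=\mathbf{0}$, whence $M\tilde v_{1j}=\theta_{1j}\tilde v_{1j}$; symmetrically $\tilde v_{2j}=(\mathbf{0}^{T},v_{2j}^{T})^{T}$ is an eigenvector of $M$ with eigenvalue $\theta_{2j}$. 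This yields $n_1+n_2-2$ mutually orthogonal eigenvectors, accounting for every $\theta_{ij}$ listed in the statement.

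For the remaining two eigenvalues I would work inside the orthogonal complement $W=\mathrm{span}\{e_1,e_2\}$, where $e_1=(\mathbf{1}_{n_1}^{T},\mathbf{0}^{T})^{T}$ and $e_2=(\mathbf{0}^{T},\mathbf{1}_{n_2}^{T})^{T}$. A direct block computation yields
$$M(\alpha e_1+\beta e_2)=\bigl((\alpha r_1+\beta a n_2)\mathbf{1}_{n_1}^{T},\,(\alpha b n_1+\beta r_2)\mathbf{1}_{n_2}^{T}\bigr)^{T},$$
so $W$ is $M$-invariant and $M|_W$ is represented in the basis $\{e_1,e_2\}$ by the $2\times 2$ matrix $\left[\begin{smallmatrix}r_1 & an_2\\ bn_1 & r_2\end{smallmatrix}\right]$. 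Its eigenvalues are precisely the roots of $(x-r_1)(x-r_2)-abn_1n_2=0$, supplying the last two eigenvalues and completing the list of $n_1+n_2$ eigenvalues of $M$.

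The one subtle point, and the reason normality appears in the hypothesis, is the basis choice in the first step: if $r_i$ has multiplicity greater than one as an eigenvalue of $M_i$, one must ensure that the orthonormal eigenbasis chosen inside the $r_i$-eigenspace actually contains $\mathbf{1}_{n_i}$, so that the other basis vectors have zero coordinate sum and are killed by $J$. Normality of $M_i$ is exactly what makes the $r_i$-eigenspace orthogonally complemented in $\mathbb{C}^{n_i}$; without it, a basis eigenvector with eigenvalue $r_i$ could fail to be orthogonal to $\mathbf{1}_{n_i}$, and the key identity $J v_{ij}=\mathbf{0}$ would collapse, breaking the whole decomposition.
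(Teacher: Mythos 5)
Your proof is correct and complete: the zero-padded eigenvectors orthogonal to $\mathbf{1}_{n_i}$ account for all the $\theta_{ij}$, the two-dimensional block-constant subspace is $M$-invariant with restriction $\left[\begin{smallmatrix}r_1 & an_2\\ bn_1 & r_2\end{smallmatrix}\right]$, and your remark on why normality is needed to place $\mathbf{1}_{n_i}$ inside an orthonormal eigenbasis is exactly the right subtlety. The paper itself quotes this lemma from \cite{rak} without reproducing a proof, and your argument is the standard one given there, so there is nothing further to compare.
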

\begin{center}
	\begin{figure}[h]\label{fig}
		~~~~~~~~~~~~~~~~~\includegraphics{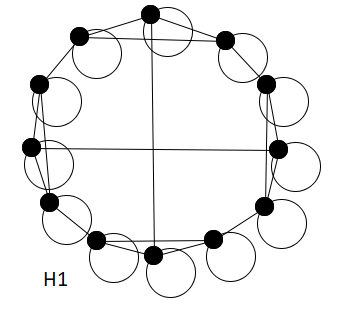}~~~~\includegraphics{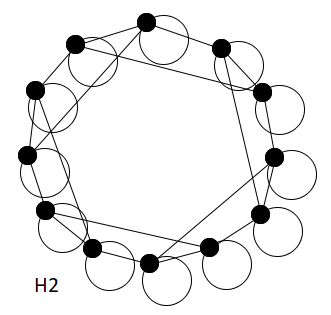}
		\caption{Graphs $H_{1}$ and $H_{2}$}
	\end{figure}
\end{center}

\begin{remark}\label{rrmk}
	Let $H_{1}$ and $H_{2}$ be the graphs as depicted in Fig. \ref{fig}. Then the spectrum of $A(H_{1})$ is $\{-2,-1,-1,0,1,1,1,1,2,3,3,4\}$ and the spectrum of $A(H_{1})$ is $\{-1,-1,-1,0,0,\\0,1,1,3,3,3,4\}$.	
\end{remark}
\begin{theorem}
	The self-loop graphs $nH_{1}\vee n\overline{K_{12}}$ and $nH_{2}\vee n\overline{K_{12}}$ are non-isomorphic and equienergetic for all integer $n\ge 1$. 	
\end{theorem}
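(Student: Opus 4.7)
My plan is to obtain the full spectrum of each of the two self-loop graphs $G_1 := nH_1 \vee n\overline{K_{12}}$ and $G_2 := nH_2 \vee n\overline{K_{12}}$ via Lemma \ref{lemma1}, and then read off both non-isomorphism and equality of self-loop energies directly from those spectra.

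First I would verify the hypotheses of Lemma \ref{lemma1}. The trace of $A(H_i)$ equals the sum of the eigenvalues in Remark \ref{rrmk}, namely $12 = |V(H_i)|$, so every vertex of $H_i$ carries a self-loop; combined with the fact that the spectral radius $4 = 3+1$ is simple in each spectrum, the underlying simple graph of each $H_i$ is a connected $3$-regular graph on $12$ vertices, so each row of $A(H_i)$ (and hence of $A(nH_i)$) sums to $4$, while $A(n\overline{K_{12}}) = \mathbf{0}$ has row sums $0$. Applying Lemma \ref{lemma1} with $a = b = 1$, $n_1 = n_2 = 12n$, $r_1 = 4$, $r_2 = 0$, the spectrum of $A(G_i)$ consists of the eigenvalues of $nH_i$ with one copy of $4$ removed, plus $12n-1$ copies of $0$, plus the two roots $2 \pm \sqrt{144n^2 + 4}$ of $x(x-4) - 144n^2 = 0$.

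Combining this with the multiplicities in Remark \ref{rrmk}, the eigenvalues of $G_1$ are $-2, -1, 0, 1, 2, 3, 4, 2 \pm \sqrt{144n^2+4}$ with multiplicities $n, 2n, 13n-1, 4n, n, 2n, n-1, 1, 1$ respectively, while those of $G_2$ are $-1, 0, 1, 3, 4, 2 \pm \sqrt{144n^2+4}$ with multiplicities $3n, 15n-1, 2n, 3n, n-1, 1, 1$. These lists differ (e.g., $-2$ has positive multiplicity in $G_1$ but not in $G_2$), so the two self-loop graphs are non-isomorphic. For the energies, both have $\alpha = 12n$ loops in $N = 24n$ vertices, so the shift is $\alpha/N = 1/2$ and $\mathcal{E}(G_i) = \sum_j |\lambda_j^{(i)} - 1/2|$. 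The two irrational eigenvalues contribute $(\sqrt{144n^2+4} + 3/2) + (\sqrt{144n^2+4} - 3/2) = 2\sqrt{144n^2+4}$ in either case, and a direct tabulation of $\sum m_\lambda |\lambda - 1/2|$ over the remaining integer eigenvalues yields $24n - 4$ for both graphs, so $\mathcal{E}(G_1) = \mathcal{E}(G_2) = 24n - 4 + 2\sqrt{144n^2+4}$.

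The only real obstacle is bookkeeping: applying Lemma \ref{lemma1} carefully so that exactly one copy of each $r_i$ is stripped from the diagonal blocks' spectra, and then summing the shifted absolute values over both spectra without arithmetic slips. Nothing beyond the spectra already recorded in Remark \ref{rrmk} and the energy definition is required.
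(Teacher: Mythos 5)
Your proposal is correct and follows essentially the same route as the paper: apply Lemma \ref{lemma1} with the spectra from Remark \ref{rrmk} to obtain the full spectrum of each join (your roots $2\pm\sqrt{144n^2+4}$ are the paper's $2\pm2\sqrt{36n^2+1}$), conclude non-isomorphism from the differing spectra, and sum $|\lambda-\tfrac12|$ to get $24n-4+4\sqrt{36n^2+1}$ for both graphs. Your explicit check of the row-sum hypothesis of Lemma \ref{lemma1} and the cospectrality argument for non-isomorphism are small additions the paper leaves implicit.
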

\begin{proof}
	Let $\Gamma_{1}=nH_{1}\vee n\overline{K_{12}}$ and $\Gamma_{2}=nH_{2}\vee n\overline{K_{12}}$. Then $\Gamma_{1}$ and $\Gamma_{2}$ are non-isomorphic graphs on $24n$ vertices. From Lemma \ref{lemma1} and by Remark \ref{rrmk}, we get,\\[3mm]
		{\small$Spec(\Gamma_{1})=\left(\begin{array}{ccccccccc}
				2-2\sqrt{36n^2+1}&\pm2&-1&0&1&3&4&2+2\sqrt{36n^2+1}\\&&&&&&&&\\
				1&n&2n&13n-1&4n&2n&n-1&1
			\end{array}\right)$}\\[2mm]
		where the eigenvalues of $A(\Gamma_{1})$ are listed in the first row and the corresponding eigenvalue multiplicity is given in the second row.\\
		\noindent Also,\\[2mm]
		$Spec(\Gamma_{2})=\left(\begin{array}{ccccccccc}
			2-2\sqrt{36n^2+1}&-1&0&1&3&4&2+2\sqrt{36n^2+1}\\&&&&&&&&\\
			1&3n&15n-1&2n&3n&n-1&1	
		\end{array}\right)$.\\[2mm]
		Since the graphs $H_{1}\vee \overline{K_{12}}$ and $H_{2}\vee \overline{K_{12}}$
		has 12n vertices with self-loops, we have\\
		$$\mathcal{E}(\Gamma_{1})=\sum_{i=1}^{24n}\left|\lambda_{i}(\Gamma_{1})-\dfrac{1}{2}\right|=24n-4+4\sqrt{36n^2+1}.$$
		and 
		$$\mathcal{E}(\Gamma_{2})=\sum_{i=1}^{24n}\left|\lambda_{i}(\Gamma_{2})-\dfrac{1}{2}\right|=24n-4+4\sqrt{36n^2+1}.$$
		Thus, the graphs $\Gamma_{1}$ and $\Gamma_{2}$ are non-isomorphic and equienergetic.
	\end{proof}
	\noindent Similarly, we have the following theorem. 
	\begin{theorem}
		The self-loop graphs $nH_{1}\vee n{K_{12}}$ and $nH_{2}\vee n{K_{12}}$ are non-isomorphic and equienergetic for all integer $n\ge 1$. 	
	\end{theorem}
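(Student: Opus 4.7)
The plan is to mirror the proof of the preceding theorem, changing only the second summand of the join. Since $K_{12}$ is $11$-regular, the matrix $A(nK_{12})$ is normal with row sum $11$ and spectrum $\{11^{(n)},-1^{(11n)}\}$, replacing the all-zero spectrum of $n\overline{K_{12}}$. With loops placed on the vertices of $nH_k$, so that $\alpha/(24n) = 1/2$ exactly as before, Lemma \ref{lemma1} applies to $\Gamma_k' := nH_k \vee nK_{12}$ with $r_1 = 4$ (the row sum of $A(H_k)$ including its loops) and $r_2 = 11$.

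First I would use Lemma \ref{lemma1} to read off $\mathrm{Spec}(A(\Gamma_k'))$. The two ``extra'' eigenvalues are the roots of $(x-4)(x-11) - 144 n^2 = 0$, namely $\frac{15 \pm \sqrt{49 + 576 n^2}}{2}$. The remaining eigenvalues come from the spectrum of $nH_k$ given by Remark \ref{rrmk} with one copy of $4$ removed, merged with $\{11^{(n-1)},-1^{(11n)}\}$ from $nK_{12}$. This produces $\mathrm{Spec}(A(\Gamma_1')) = \{-2^{(n)}, -1^{(13n)}, 0^{(n)}, 1^{(4n)}, 2^{(n)}, 3^{(2n)}, 4^{(n-1)}, 11^{(n-1)}\}$ together with the two quadratic roots, and an analogous but manifestly distinct spectrum for $\Gamma_2'$ (with $-1$ of multiplicity $14n$ and $3$ of multiplicity $3n$, and no eigenvalue $\pm 2$); this spectral gap immediately yields non-isomorphism.

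Then I would evaluate $\mathcal{E}(\Gamma_k') = \sum_i |\lambda_i(\Gamma_k') - 1/2|$ in three pieces. The quadratic roots contribute $\sqrt{49 + 576 n^2}$ to both energies, since $\sqrt{49 + 576 n^2} \geq 25 > 14$ for $n \geq 1$ so the two absolute values telescope. The $nK_{12}$ block contributes $(n-1)\cdot\frac{21}{2} + 11n\cdot\frac{3}{2} = 27n - \frac{21}{2}$ identically in both cases. The essential point, already implicit in the preceding theorem, is that Remark \ref{rrmk} forces the shifted-energy identity $\sum_{\lambda \in \mathrm{Spec}(A(H_1))} |\lambda - \tfrac{1}{2}| = \sum_{\lambda \in \mathrm{Spec}(A(H_2))} |\lambda - \tfrac{1}{2}| = 18$, so each $nH_k$ block contributes $18n - \frac{7}{2}$. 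Summing the three pieces gives $\mathcal{E}(\Gamma_1') = \mathcal{E}(\Gamma_2') = 45n - 14 + \sqrt{49 + 576 n^2}$. There is no real obstacle here; the argument is a direct transcription of the preceding proof with the new row sum $r_2 = 11$, and the only thing requiring care is the bookkeeping when merging the two blocks of $-1$ eigenvalues coming from $nH_k$ and $nK_{12}$.
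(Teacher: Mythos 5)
Your proposal is correct and is precisely the argument the paper intends: the paper gives no separate proof for this theorem (it says only ``Similarly''), and your transcription of the preceding proof with $r_2=11$ and $\mathrm{Spec}(A(nK_{12}))=\{11^{(n)},(-1)^{(11n)}\}$ is the right one. All the details check out — the quadratic roots $\tfrac{15\pm\sqrt{49+576n^2}}{2}$, the merged spectra, the per-copy shifted sum $18$ for both $H_1$ and $H_2$, and the common value $45n-14+\sqrt{49+576n^2}$ are all correct.
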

	The following corollary is immediate from the above two theorems.
	\begin{corollary}
		There exists a pair of non-isomorphic equienergetic graphs with self-loops on $24n$ vertices for all $n\ge 1$.	
	\end{corollary}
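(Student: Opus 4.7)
The strategy is to mirror the preceding theorem step by step: replace the $0$-regular graph $n\overline{K_{12}}$ by the $11$-regular graph $nK_{12}$ and repeat the spectral computation, placing the self-loops on the $nK_{12}$-side of the join so that $\alpha = 12n$ and $\alpha/(24n) = 1/2$. First I would compute $Spec(\Gamma_1)$ and $Spec(\Gamma_2)$ using Lemma~\ref{lemma1}: both $H_1$ and $H_2$ are $4$-regular (the largest eigenvalue in Remark~\ref{rrmk} is $4$), so Lemma~\ref{lemma1} applies with $r_1 = 4$, $r_2 = 11$, $n_1 = n_2 = 12n$, $a = b = 1$, giving the two ``mixing'' eigenvalues $(15 \pm \sqrt{49 + 576n^2})/2$ as the roots of $(x-4)(x-11) - 144n^2 = 0$. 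The remaining eigenvalues of $\Gamma_i$ are the non-row-sum eigenvalues of $nH_i$ together with those of $nK_{12}$, namely $-1$ with multiplicity $11n$ and $11$ with multiplicity $n-1$.

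Next I would plug these spectra into the energy formula $\mathcal{E}(\Gamma_i) = \sum_\lambda |\lambda - 1/2|$ in exactly the form used in the previous theorem. The quadratic-root contribution, $\sqrt{49 + 576n^2}$, and the $nK_{12}$-side contribution, $11n \cdot (3/2) + (n-1)(21/2) = 27n - 21/2$, are both independent of $i$. Hence the equality $\mathcal{E}(\Gamma_1) = \mathcal{E}(\Gamma_2)$ reduces to the single identity
\[
\sum_{\lambda \in Spec(H_1)} \left|\lambda - \tfrac{1}{2}\right| \;=\; \sum_{\lambda \in Spec(H_2)} \left|\lambda - \tfrac{1}{2}\right|,
\]
which is the same spectral coincidence that powered the preceding theorem: a short direct check from Remark~\ref{rrmk} shows that both sums equal $18$. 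Assembling the pieces yields $\mathcal{E}(\Gamma_1) = \mathcal{E}(\Gamma_2) = 45n - 14 + \sqrt{576n^2 + 49}$.

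Finally, $\Gamma_1$ and $\Gamma_2$ are non-isomorphic because their spectra differ: the eigenvalues $\pm 2$ appear in $Spec(\Gamma_1)$ (each with multiplicity $n$) but not in $Spec(\Gamma_2)$. There is no genuine obstacle in the argument; it is essentially bookkeeping built around the spectral identity above. The only care required is to track that the non-row-sum eigenvalues of $nK_{12}$ are $-1$ and $11$ (rather than $0$ as for $n\overline{K_{12}}$), which changes the $nK_{12}$-side and quadratic-root contributions but in a way that is symmetric in $i$ and hence cancels in the comparison.
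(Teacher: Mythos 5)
Your overall route is the same as the paper's: the corollary is immediate from the two preceding theorems, and you simply flesh out the $nH_i\vee nK_{12}$ version (which the paper dispatches with ``similarly''). Your arithmetic is correct — the quadratic $(x-4)(x-11)-144n^2=0$, the identity $\sum_{\lambda\in Spec(H_1)}|\lambda-\tfrac12|=\sum_{\lambda\in Spec(H_2)}|\lambda-\tfrac12|=18$, the final value $45n-14+\sqrt{576n^2+49}$, and the non-isomorphism via the eigenvalues $\pm2$ all check out.

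One supporting claim is wrong, though, and worth fixing: the self-loops are not ``placed on the $nK_{12}$-side,'' and $H_1,H_2$ are not $4$-regular simple graphs. The trace of $A(H_i)$ in Remark \ref{rrmk} equals $12$, so each $H_i$ already carries a self-loop at every one of its $12$ vertices (it is $3$-regular plus a loop at each vertex, whence row sum $4$); the $12n$ looped vertices of $\Gamma_j$ are exactly the $nH_i$-side, and $nK_{12}$ is loop-free. Your computation is in fact consistent with this correct configuration — you take $r_2=11$ and the loop-free spectrum $\{11^{(n-1)},(-1)^{(11n)}\}$ for the $nK_{12}$ block — which is why the numbers land correctly. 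But if one took your stated setup literally (loops added to $nK_{12}$, $H_i$ simple), the trace of $A(H_i)$ would have to be $0$, the quadratic would become $(x-4)(x-12)-144n^2=0$, and the $K_{12}$-block would contribute eigenvalues $0$ and $12$ instead, so the energy would change. The slip does not invalidate the result, but the justification for applying $\mathcal{E}(\Gamma_j)=\sum|\lambda-\tfrac12|$ with $\alpha=12n$ rests on the loops sitting on the $nH_i$ vertices.
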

\section{Conclusions}
In this work, we confirm the conjecture that there exist a subset $S$ of $V(G)$ such that  $\mathcal{E}(G_{S})> \mathcal{E}(G)$ by showing that a vertex  independent set or its complement set satisfies this condition. In future,
it would be interesting to determine all possible sets $S$ for which the condition holds. Also, in this paper, pairs of connected self-loops graphs which are equienergetic are presented using join operation. The problem of determining connected graphs with self-loops whose energies are equal to the ordinary energies of the underlying graphs would be interesting for future work.  
\section*{Declaration of competing interest}
The author declare no conflict of interest.

\section*{Acknowledgements}
 K. C. Das is supported by National Research Foundation funded by the Korean government (Grant No. 2021R1F1A1050646).

\end{document}